\newtheorem{theorem}{Theorem}[section]
\newtheorem{lemma}[theorem]{Lemma}
\newtheorem{definition}[theorem]{Definition}
\theoremstyle{remark}
\newtheorem{remark}[theorem]{Remark}
\newtheorem{example}[theorem]{Example}
\numberwithin{equation}{section}
\newcommand{\calO}{\ensuremath{\mathcal{O}}}
\newcommand{\bbQ}{\ensuremath{\mathbb{Q}}}
\newcommand{\bbQbar}{\ensuremath{{\overline\bbQ}}}
\newcommand{\bbQpbar}{{\ensuremath{\overline{\mathbb{Q}}_p}}}
\newcommand{\bbQl}{\ensuremath{{\mathbb{Q}_\ell}}}
\newcommand{\End}{\ensuremath{\mathrm{End}}}
\newcommand{\Fr}{\ensuremath{\mathrm{Fr}}}
\newcommand{\Frss}{\ensuremath{\mathrm{Fr\text{-}ss}}}
\newcommand{\Gal}{\ensuremath{\mathrm{Gal}}}
\newcommand{\tr}{\ensuremath{\mathrm{tr}}}
\newcommand{\WD}{\ensuremath{\mathrm{WD}}}
\newcommand{\wild}{\ensuremath{\mathrm{wild}}}
\newcommand{\xra}{\xrightarrow}
\newcommand{\bbZ}{\ensuremath{\mathbb{Z}}}
\newcommand{\bbZp}{\ensuremath{{\mathbb{Z}_p}}}
\newcommand{\gln}{\ensuremath{\operatorname{GL}}}
\newcommand{\Sp}{\ensuremath{\mathrm{Sp}}}
\newcommand{\mo}{{-1}}
\begin{document}
\title{Distinguishing pure representations by normalized traces}

\author{Manish Kumar Pandey}
\address{Harish-Chandra Research Insititute HBNI, Chhatnag Road, Jhunsi, Allahabad 211019, India}
\curraddr{}
\email{manishpandey@hri.res.in}
\thanks{}

\author{Sudhir Pujahari}
\address{Harish-Chandra Research Insititute HBNI, Chhatnag Road, Jhunsi, Allahabad 211019, India}
\curraddr{}
\email{sudhirpujahari@hri.res.in}
\thanks{}

\author{Jyoti Prakash Saha}
\address{Department of Mathematics, Ben-Gurion University of the Negev, Be'er Sheva 8410501, Israel}
\curraddr{}
\email{jyotipra@post.bgu.ac.il}
\thanks{}

\subjclass[2010]{11F80}

\keywords{Galois representations, Pure representations, Normalized traces}

\begin{abstract}
Given two pure representations of the absolute Galois group of an $\ell$-adic number field with coefficients in $\bbQpbar$ (with $\ell\neq p$), we show that the Frobenius-semisimplifications of the associated Weil--Deligne representations are twists of each other by an integral power of certain unramified character if they have equal normalized traces. This is an analogue of a recent result of Patankar and Rajan in the context of local Galois representations. 
\end{abstract}

\maketitle

\section{Introduction}

\subsection{Motivation}
The study of representations of the absolute Galois groups of number fields and local fields is a central theme in number theory. As a consequence of the Chebotarev density theorem, it follows that a continuous representation of the absolute Galois group of a number field (which are unramified almost everywhere and have coefficients in $\bbQpbar$ for instance) can be characterized up to semisimplification by the traces of the Frobenius elements at the unramified places. This statement for the Galois representations associated with normalized Hecke eigen new cusp forms can be seen as an analogue of the strong multiplicity one theorem. In \cite[Theorem 2 (iii)]{RajanStrongMultOne}, Rajan proved under suitable assumptions that two continuous semisimple representations of the absolute Galois group of a number field are isomorphic up to a twist by a Dirichlet character if they have equal traces at the Frobenius elements at places varying in a set of positive upper density. This provides a refinement of the strong multiplicity one theorem for Hecke eigen cusp forms \cite[Corollary 1]{RajanStrongMultOne}. 

From a recent result of Kulkarni, Patankar and Rajan \cite{KulkarniPatankarRajan}, it follows that two elliptic curves $E_1, E_2$ defined over a number field $F$ are isogeneous over an extension of $F$ if their Frobenius fields are equal at places of positive upper density and one of $E_1, E_2$ is without complex multiplication. This inspired a result of Murty and Pujahari \cite[Theorem 1.1]{MurtyPujahari}, which states that two normalized Hecke eigen cusp forms are twists of each other by some Dirichlet character if at least one of them is without complex multiplication and both of them have equal normalized Hecke eigenvalues at primes varying in a set of positive upper density. This result can be restated in terms of their associated Galois representations, which are predicted to be pure by the Ramanujan conjecture. Since many Galois representations of arithmetic interest are pure (as predicted by the weight-monodromy conjecture \cite[Conjecture 3.9]{IllusieMonodromieLocale}), we may expect an analogue of the result of \cite{MurtyPujahari} for pure Galois representations. Indeed, such a result is established by Patankar and Rajan \cite[Theorem 2]{PatankarRajan}. Under appropriate assumptions, they proved that if $\rho_1, \rho_2$ are two pure representations of the absolute Galois group of a number field with coefficients in $\bbQpbar$ and the actions of the Frobenius element $\Fr_v$ on $\rho_1,\rho_2$ have equal normalized traces for $v$ varying in a set of places of positive upper density, then $\rho_1$ is a twist of $\rho_2$ by the product of a power of the $p$-adic cyclotomic character and a finite order character. Its proof relies on \cite[Theorem 2 (iii)]{RajanStrongMultOne}. 

In this article, we investigate an analogue of \cite[Theorem 2]{PatankarRajan} for representations of the absolute Galois groups of $\ell$-adic number fields with coefficients in $\bbQpbar$ for $\ell\neq p$. 

\subsection{Result obtained}
Let $p, \ell$ be distinct primes. Let $K$ be a finite extension of $\bbQl$. We denote by $q$ the cardinality of the residue field of the ring of integers of $K$. Let $W_K$ denote the Weil group of $K$. 
Given a continuous representation $\rho$ of the absolute Galois group $\Gal(\overline K/K)$ of $K$ with coefficients in $\bbQpbar$, the Frobenius-semisimplification of its Weil--Deligne parametrization is denoted by $\WD(\rho)^\Frss$ (see \S \ref{Sec: preliminaries}). 
We obtain the following result. 

\begin{theorem}
\label{Theorem Introduction}
Let $\rho_1, \rho_2$ be pure representations of the absolute Galois group $\Gal(\overline K/K)$ with coefficients in $\bbQpbar$. If $\rho_1, \rho_2$ have equal normalized traces, then $\WD(\rho_1)^\Frss$ is isomorphic to $\psi^w \otimes \WD(\rho_2)^\Frss$ where $\psi:W_K \to \bbQbar_p^\times$ denotes the unramified character sending the geometric Frobenius element to $q^{1/2}$ and $w$ denotes the difference of the weights of $\rho_1$ and $\rho_2$.
\end{theorem}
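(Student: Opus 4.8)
The plan is to reduce the assertion to a rigidity statement for pure Weil--Deligne representations: once one knows that a Frobenius-semisimple Weil--Deligne representation which is pure of a prescribed weight is determined up to isomorphism by its underlying $W_K$-representation, the theorem follows formally from Brauer--Nesbitt. Throughout write $\WD(\rho_i)^\Frss=(r_i,N_i)$, let $w_i\in\bbZ$ be the weight of $\rho_i$ (so $w=w_1-w_2$), and recall that a square root $q^{1/2}\in\bbQpbar^\times$ has been fixed, so that $\psi(g)$ is defined for every $g\in W_K$.

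First I would unwind the hypothesis. By the definition of the normalized trace (cf.\ \S\ref{Sec: preliminaries}), $\rho_i$ has as normalized trace the locally constant function $g\mapsto\psi(g)^{-w_i}\tr r_i(g)$ on $W_K$ (here one uses that semisimplification does not change traces, so $\tr r_i=\tr\WD(\rho_i)$). Equality of the normalized traces of $\rho_1$ and $\rho_2$ thus says
\[
\tr r_1(g)=\psi(g)^{w}\tr r_2(g)=\tr\bigl((\psi^{w}\otimes r_2)(g)\bigr)\qquad\text{for all }g\in W_K.
\]
As $r_1$ and $\psi^{w}\otimes r_2$ are semisimple representations of $W_K$ over the characteristic-zero field $\bbQpbar$ with the same character, the Brauer--Nesbitt theorem yields $r_1\cong\psi^{w}\otimes r_2$ as $W_K$-representations; equivalently, $\WD(\rho_1)^\Frss$ and $\psi^{w}\otimes\WD(\rho_2)^\Frss$ have isomorphic underlying $W_K$-representations. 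Next I would check that both of these Weil--Deligne representations are pure of weight $w_1$: Frobenius-semisimplification alters neither the monodromy filtration (a purely linear-algebraic invariant of $N$ on the ambient space) nor the Frobenius eigenvalues on its graded pieces, so $\WD(\rho_1)^\Frss$ is pure of weight $w_1$; and $\psi^{w}$ is pure of weight $w$, so tensoring by it — which scales all geometric Frobenius eigenvalues by $q^{w/2}$ and leaves the monodromy filtration unchanged — makes $\psi^{w}\otimes\WD(\rho_2)^\Frss$ pure of weight $w_2+w=w_1$.

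It then remains to prove the rigidity statement: \emph{a Frobenius-semisimple Weil--Deligne representation of $W_K$ over $\bbQpbar$ that is pure of weight $w$ is determined up to isomorphism by its underlying $W_K$-representation.} To prove it I would decompose the representation as $\bigoplus_a\Sp(n_a)\otimes\tau_a$ with each $\tau_a$ an irreducible representation of $W_K$ (the standard classification of Frobenius-semisimple Weil--Deligne representations). Purity of the whole forces each block $\Sp(n_a)\otimes\tau_a$ to be pure of weight $w$; since an irreducible $W_K$-representation has all its geometric Frobenius eigenvalues of one and the same absolute value, this absolute value is pinned down by $n_a$ and $w$, and concretely the Jordan--H\"older constituents $\tau_a,\ |\cdot|\otimes\tau_a,\ \dots,\ |\cdot|^{n_a-1}\otimes\tau_a$ of the block have ``Frobenius weights'' forming the symmetric progression $w-(n_a-1),\,w-(n_a-3),\,\dots,\,w+(n_a-1)$. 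I would then reconstruct the multiset $\{(n_a,\tau_a)\}$ from the underlying $W_K$-representation by peeling blocks off the top: picking an irreducible constituent $\sigma$ of maximal Frobenius weight $w_{\max}$, any block in the decomposition that involves $\sigma$ must carry it as the top of its progression, hence has length $n:=w_{\max}-w+1$; and since $|\cdot|^{i}\otimes\sigma\not\cong\sigma$ for $1\le i\le n-1$ (because $q^{i}\neq1$), $\sigma$ appears in the decomposition only as the top of $\Sp(n)\otimes\sigma$, with multiplicity equal to that of $\sigma$ in the $W_K$-representation. Deleting the constituents of these blocks leaves a Frobenius-semisimple Weil--Deligne representation that is again pure of weight $w$ and whose underlying $W_K$-representation is determined, so induction on the dimension finishes the argument; applying this to $\WD(\rho_1)^\Frss$ and $\psi^{w}\otimes\WD(\rho_2)^\Frss$ gives the theorem.

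I expect the rigidity statement of the last paragraph to be the only real obstacle — in particular the combinatorial point that purity forces the ``peeling'' of $\Sp$-blocks to be unique. The remaining ingredients are standard and should all be available from \S\ref{Sec: preliminaries}: Grothendieck's monodromy theorem attaching $(r_i,N_i)$ to $\rho_i$, the classification of Frobenius-semisimple Weil--Deligne representations into the blocks $\Sp(n)\otimes\tau$, the Brauer--Nesbitt theorem, and the behaviour of purity under Frobenius-semisimplification and unramified twisting.
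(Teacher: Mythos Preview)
Your proposal is correct and takes essentially the same approach as the paper: both reduce to the classification of Frobenius-semisimple Weil--Deligne representations as sums of blocks $\Sp_t(\tau)$, use purity to pin down the block length $t$ from a highest-weight irreducible constituent, peel that block off, and induct on dimension. The only difference is organizational---you first apply Brauer--Nesbitt and then isolate a standalone rigidity statement (a Frobenius-semisimple Weil--Deligne representation pure of a given weight is determined by its underlying $W_K$-representation), whereas the paper packages the peeling step as Lemma~\ref{Lemma sum} and runs the induction directly on the pair $(\rho_1,N_1),(\rho_2,N_2)$ in Theorem~\ref{Theorem WD}.
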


The above theorem follows from a similar result about Weil--Deligne representations, proved in Theorem \ref{Theorem WD}. The proof of Theorem \ref{Theorem WD} is based on the observation that the knowledge of a highest and a lowest weight irreducible summand of a Frobenius-semisimple pure Weil--Deligne representation (when thought of as a representation of the Weil group by forgetting the monodromy) determines a `large part' (i.e., an indecomposable summand having monodromy with highest degree of nilpotency) of the pure representation and as a consequence, a Frobenius-semisimple pure representation can be well-understood from its trace via an induction argument. This observation is first used in \cite[Chapter 1]{Thesis} (to the best of our knowledge) and subsequently used in the proofs of \cite[Theorem 3.1, Lemma 4.2, Proposition 4.1, Theorem 4.3]{BigPuritySubAIF}. Moreover, Theorem \ref{Theorem WD} can be recovered from \cite[Theorem 4.3]{BigPuritySubAIF} (as outlined in Remark \ref{Remark}), which studies pure specializations of pseudorepresentations. However, our proof of Theorem \ref{Theorem WD} does not use the notion of pseudorepresentations. 

\subsection{Acknowledgements}
The first author would like to thank Prof.\,B.\,Ramakrishnan for his encouragement, during the work of this paper author is supported by the Infosys scholarship. The third author would like to thank Prof.\,C.\,S.\,Dalawat for an invitation to the Harish-Chandra Research Institute during June, 2017 when this work was initiated. He is grateful to the institute for providing a warm hospitality. During the final stage of the work, he was supported by a postdoctoral fellowship at the Ben-Gurion University of the Negev, offered by the Israel Science Foundation Grant number 87590011 of Prof.\,Ishai Dan-Cohen. 

\section{Preliminaries}
\label{Sec: preliminaries}
Let $\ell$ be a prime and $K$ denote a finite extension of $\bbQl$. Denote by $G_K$ the absolute Galois group $\Gal(\overline{K}/K)$ of $K$. Let $I_K$ (resp. $I_K^\wild$) denote the inertia (resp. wild inertia) subgroup of $G_K$. Let $\varpi$ denote a uniformizer of the ring of integers $\calO_K$ of $K$. Given a compatible system of roots of unity $\zeta = (\zeta_n)_{\ell\nmid n} $, there is an isomorphism 
$t_\zeta: I_K/I_K^\wild \xra{\sim} \prod_{\ell' \neq \ell} \bbZ_{\ell'}$
such that $\sigma (\varpi^{1/n}) = \zeta_n ^{t_\zeta(\sigma) } \varpi^{1/n}$ for any $\sigma\in I_K/I_K^\wild$. 
Fix a prime $p$ with $p\neq \ell$. Let $t_{\zeta, p}: I_K \to \bbZp$ denote the composition of the quotient map $I_K \to I_K/I_K^\wild$, the map $t_\zeta$ and the projection map $\prod_{\ell' \neq \ell} \bbZ_{\ell'} \to \bbZp$. The Weil group $W_K$ is defined as the inverse image of the subgroup generated by the geometric Frobenius element $\Fr$ under the projection map $G_K\to G_K/I_K$. We put the smallest topology on $W_K$ such that $I_K$ (with its usual topology) is open in it. Let $v_K: W_K \to \bbZ$ denote the group homomorphism which is trivial on inertia and sends $\Fr$ to $1$. Let $q$ denote the cardinality of the residue field of $\calO_K$. Henceforth we fix a lift $\varphi\in G_K$ of $\Fr$ and a square root $q^{1/2}$ of $q$ in $\bbQpbar$. The following result due to Grothendieck explains the action of $I_K$ on $p$-adic representations of $G_K$. 

\begin{theorem}
[Grothendieck]
\label{Thm: Grothendieck}
\cite[pp. 515--516]{SerreTate}
Let $\rho: G_K \to \gln(V)$ be a continuous representation of $G_K$ on a finite dimensional $\bbQpbar$-vector space $V$. Then there exists an open normal subgroup $U$ of $I_K$ and a unique nilpotent element $N \in \End_\bbQpbar(V)$ such that 
$$\rho(u ) = \exp (t_{\zeta, p} (u) N) \quad \text{ for all } u \in U.$$
The nilpotent operator $N$ is called the \textnormal{monodromy} of $\rho$. 
\end{theorem}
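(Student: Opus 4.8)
The plan is to give Grothendieck's argument, whose engine is the incompatibility of pro-$p$ and pro-$\ell$ groups (here $\ell\neq p$). First I would make some reductions: since $\rho$ is continuous and $V$ finite dimensional, $V$ is defined over a finite extension $E/\bbQp$, say $\dim_E V=n$; since $\rho(G_K)$ is compact it stabilises an $\calO_E$-lattice, so after a change of basis $\rho(G_K)\subseteq\gln_n(\calO_E)$; and since $\rho(I_K^\wild)$ is finite (being a pro-$\ell$ subgroup of $\gln_n(\calO_E)$), after replacing $K$ by a finite extension I may assume $\rho|_{I_K^\wild}$ is trivial --- this changes $q$ only to a power of itself and rescales $t_{\zeta,p}$ by a positive rational, so it does not affect the shape of the conclusion, which can be transferred back to the original $K$ by passing to the $I_K$-core of the subgroup produced and rescaling $N$. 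Finally I would fix $m$ large enough that $\Gamma:=\ker\bigl(\gln_n(\calO_E)\to\gln_n(\calO_E/\mathfrak m_E^m)\bigr)$ is a torsion-free pro-$p$ group on which $\log$ and $\exp$ are mutually inverse continuous bijections to and from $\mathfrak m_E^m M_n(\calO_E)$; it is open and normal in $\gln_n(\calO_E)$, and I set $U_0:=\rho^{-1}(\Gamma)\cap I_K$, an open normal subgroup of $I_K$.

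The first real step is that $\rho|_{U_0}$ factors through a quotient of $\bbZp$. Indeed $U_0\cap I_K^\wild$ is open in the pro-$\ell$ group $I_K^\wild$, while $U_0/(U_0\cap I_K^\wild)$ embeds as an open subgroup of $I_K/I_K^\wild\cong\prod_{\ell'\neq\ell}\bbZ_{\ell'}$; therefore the maximal pro-$p$ quotient of $U_0$ is the one through which $t_{\zeta,p}|_{U_0}$ factors, and $t_{\zeta,p}(U_0)$ is an open subgroup of $\bbZp$. As $\rho(U_0)\subseteq\Gamma$ is pro-$p$, the homomorphism $\rho|_{U_0}$ kills $\ker(t_{\zeta,p}|_{U_0})$; fixing $\sigma\in U_0$ with $t_{\zeta,p}(\sigma)$ a topological generator of $t_{\zeta,p}(U_0)$, the image $\rho(U_0)$ is topologically generated by $\rho(\sigma)$ and, being torsion-free, is either trivial --- in which case $N=0$ works --- or a copy of $\bbZp$.

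Next I would determine the eigenvalues of $\rho(\sigma)$. Frobenius acts on the tame quotient $I_K/I_K^\wild$ by $\sigma\mapsto\sigma^{q^{\pm1}}$ (the exponent depends on conventions and is immaterial here), so --- using $\rho|_{I_K^\wild}=1$, and raising the relation to the $q$-th power in the case of the exponent $q^{-1}$ --- the matrices $\rho(\sigma)$ and $\rho(\sigma)^q$ are conjugate in $\gln_n(E)$. Comparing characteristic polynomials, the multiset of eigenvalues of $\rho(\sigma)$ is stable under $\lambda\mapsto\lambda^q$, so this map permutes it and every eigenvalue $\lambda$ satisfies $\lambda^{q^r-1}=1$ for some $r\geq1$: the eigenvalues of $\rho(\sigma)$ are roots of unity. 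Since $\rho(\sigma)\in\Gamma$, each of them is $\equiv1$ modulo the maximal ideal of a local field, which forces its order to be a power of $p$; and these orders are bounded, say by $p^c$, because a local field contains only finitely many $p$-power roots of unity. Now I would shrink and take a logarithm: put $\tau:=\sigma^{p^c}$ and $U:=U_0\cap t_{\zeta,p}^{-1}\bigl(t_{\zeta,p}(\tau)\,\bbZp\bigr)$, which is open and normal in $I_K$ because $t_{\zeta,p}$ is $I_K$-conjugation invariant; then $\rho(\tau)=\rho(\sigma)^{p^c}$ is unipotent, so $N:=t_{\zeta,p}(\tau)^{-1}\log\rho(\tau)\in M_n(E)\subseteq\End_\bbQpbar(V)$ is nilpotent. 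For $u\in U$, writing $t_{\zeta,p}(u)=t_{\zeta,p}(\tau)\,x$ with $x\in\bbZp$ and using that $\rho|_{U_0}$ depends only on $t_{\zeta,p}$, one gets $\rho(u)=\rho(\tau)^x=\exp\bigl(x\log\rho(\tau)\bigr)=\exp\bigl(t_{\zeta,p}(u)\,N\bigr)$, by continuity and density of $\bbZ$ in $\bbZp$. Uniqueness is immediate: if $N'$ also works on an open subgroup, then on the open intersection $t_{\zeta,p}$ attains a nonzero value $s$, and $\exp(sN)=\exp(sN')$ forces $sN=sN'$ since $\exp$ is injective on nilpotent matrices.

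The step I expect to be the main obstacle is the eigenvalue computation: passing from ``$\rho|_{U_0}$ has pro-$p$ image'' to ``$\rho$ is unipotent-valued on a smaller subgroup'' is not formal --- for instance $\bbZp\to\gln_1(\bbZp)$, $1\mapsto1+p$, is a continuous pro-$p$ representation of inertia that is not unipotent --- and it genuinely uses the arithmetic input that Frobenius raises tame inertia to $q$-th powers, together with the finiteness of the wild image (exploited to reduce to the case $\rho|_{I_K^\wild}=1$). Everything else is bookkeeping with the three relevant ``primes'': the residue characteristic $\ell$, the coefficient characteristic $p$, and the primes $\ell'\neq\ell$ occurring in the tame quotient.
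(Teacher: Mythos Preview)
The paper does not supply a proof of this theorem: it is stated as background and attributed to Grothendieck via the citation \cite[pp.~515--516]{SerreTate}, with no argument given. So there is nothing in the paper to compare against.

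That said, your proposal is a correct and essentially complete rendition of Grothendieck's original argument as recorded in Serre--Tate. The key steps --- descent to a finite coefficient field and an $\calO_E$-lattice, finiteness of the image of wild inertia, reduction of $\rho|_{U_0}$ to a homomorphism factoring through $t_{\zeta,p}$, the Frobenius conjugation $\rho(\sigma)\sim\rho(\sigma)^q$ forcing the eigenvalues to be roots of unity, shrinking to make $\rho(\tau)$ unipotent, and reading off $N$ via the logarithm --- are all sound, and the uniqueness argument is clean. One small redundancy: the preliminary base change to kill $\rho|_{I_K^\wild}$ is not actually needed, because once you show $\rho|_{U_0}$ factors through $t_{\zeta,p}$ you already have $\rho(w)=1$ for any $w\in U_0\cap I_K^\wild$; since $U_0=\rho^{-1}(\Gamma)\cap I_K$ is normal in $G_K$, the wild correction term in $\varphi\sigma\varphi^{-1}=\sigma^q w$ lies in $U_0\cap I_K^\wild$ and is killed by $\rho$ anyway. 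Dropping that reduction would spare you the bookkeeping about transferring $U$ and $N$ back from $K'$ to $K$.
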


\begin{definition}
\label{WD definition}
A \textnormal{Weil--Deligne representation} of $W_K$ on a finite dimensional $\bbQpbar$-vector space $V$ is a pair $(r, N)$ where $r$ is a $\bbQpbar$-linear representation of $W_K$ on $V$ with open kernel and $N$ is a nilpotent operator (called the \textnormal{monodromy}) on $V$ such that 
$$r(\sigma) N r(\sigma)^\mo = q^{-v_K(\sigma)}N$$
for all $\sigma\in W_K$ \cite[8.4.1]{DeligneConstantesDesEquationsFunctional}. 
The \textnormal{monodromy filtration} of $(r,N)$ is the filtration $M_\bullet$ on $V$ defined by 
$$M_k = \sum_{i+j=k} \ker N^{i+1} \cap N^{-j}V$$
for $k\in \bbZ$ \cite[1.7.2]{DeligneWeil2}. 
\end{definition}

For a representation $\rho$ as in Theorem \ref{Thm: Grothendieck} with monodromy $N$, define its {\it Weil--Deligne parametrization} $\WD(\rho)$ as the Weil--Deligne representation $(r, N)$ where $r$ is a representation of $W_K$ on $V$ by 
$$r(g) = \rho(g) \exp (-t_{\zeta, p}(\varphi^{-v_K(g)}g)N)$$
for all $g\in W_K$ \cite[8.4.2]{DeligneConstantesDesEquationsFunctional}. 

\begin{example}
Let $\chi : W_K\to \bbQbar_p^\times$ denote the unramified character which sends $\varphi$ to $q^\mo$. Given a representation $r$ of $W_K$ on a finite dimensional $\bbQpbar$-vector space $V$ with open kernel, and a positive integer $t$, define the {\it special representation} $\Sp_t(r)$ as the Weil--Deligne representation of $W_K$ with the direct sum 
$$V_{t-1} \oplus \cdots \oplus V_0,
\quad V_i := V,
$$
of $t$-copies of $V$ as the underlying space, on which $W_K$ acts by 
$$r \chi^{t-1} \oplus r \chi^{t-2}\oplus \cdots \oplus r \chi \oplus r,$$
and the monodromy induces the identity map from $V_i$ to $V_{i+1}$ for each $0\leq i <t-1$ and vanishes on the summand $V_{t-1}$. 
\end{example}

Given a Weil--Deligne representation $(r, N)$ as in Definition \ref{WD definition}, write $r(\varphi) = r(\varphi)^{ss} r(\varphi)^u = r(\varphi)^{u} r(\varphi)^{ss}$ where $r(\varphi)^{ss}$ (resp. $r(\varphi)^u$) is a semisimple (resp. unipotent) operator. Define 
$$\widetilde r (g)  = r(g) (r(\varphi)^u)^{-v_K(g)} \quad\text{ for all } g \in W_K.$$
Then by \cite[8.5]{DeligneConstantesDesEquationsFunctional}, the pair $(\widetilde r, N)$ is a Weil--Deligne representation of $W_K$ on the underlying space of $(r, N)$. The pair is denoted by $(r, N)^\Frss$ and is called the {\it Frobenius--semisimplification} of $(r, N)$ \cite[D\'efinition 8.6]{DeligneConstantesDesEquationsFunctional}. 

\begin{definition}
A Weil--Deligne representation $(r, N)$ on a finite dimensional $\bbQpbar$-vector space $V$ is called \textnormal{pure of weight $w$} if the eigenvalues of $\varphi$ on the $i$-th grading of the monodromy filtration of $(r, N)$ are $q$-Weil numbers of weight $w+i$ for any $i\in \bbZ$. A continuous representation $\rho$ of $G_K$ on a finite dimensional $\bbQpbar$-space is called \textnormal{pure of weight $w$} if $\WD(\rho)$ is pure of weight $w$. 
\end{definition}

\begin{definition}
If $(r, N)$ is a pure Weil--Deligne representation of weight $w$, then its \textnormal{normalized trace} is defined as the trace of the representation $\psi^{-w}\otimes r$. 
If $\rho:G_K\to \gln_n(\bbQpbar)$ is a pure representation of weight $w$, then its \textnormal{normalized trace} is defined as the trace of the representation $\psi^{-w}\otimes (\rho|_{W_K})$ of $W_K$. 
\end{definition}

Note that the notion of normalized trace depends on the choice of the square root $q^{1/2}$ of $q$ in $\bbQpbar$. 

\begin{lemma}
\label{Lemma sum}
Suppose $(r, N)$ is a Weil--Deligne representation of $W_K$ with coefficients in $\bbQpbar$. Suppose $\sigma_1, \cdots, \sigma_k$ are irreducible Frobenius-semisimple pure representations of $W_K$ such that the sum of their traces is equal to the trace of $r$ and $\sigma_1$ has maximal weight among them. Assume further that the difference of the weights of a highest and lowest weight representation among $\sigma_1, \cdots, \sigma_k$ is $2(t-1)$ for a positive integer $t$. Then $(r, N)^\Frss$ is isomorphic to the direct sum $(r', N') \oplus \Sp_{t} (\sigma_1)$ for some pure Weil--Deligne representation $(r', N')$ of $W_K$. 
\end{lemma}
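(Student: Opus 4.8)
The plan is to combine the classification of Frobenius-semisimple Weil--Deligne representations as direct sums of special representations with the weight combinatorics imposed by purity. First I would translate the trace hypothesis. Since over $\bbQpbar$ the trace of a representation determines its semisimplification, the equality $\tr r = \sum_{i=1}^{k}\tr\sigma_i$ gives $r^{ss}\cong\bigoplus_{i=1}^{k}\sigma_i$ as representations of $W_K$. Writing $(\widetilde r,N)=(r,N)^{\Frss}$, the underlying $W_K$-representation $\widetilde r$ has the same semisimplification as $r$ (this is standard: $\widetilde r$ agrees with $r$ on $I_K$ and differs from it only by multiplying the Frobenius by the unipotent operator $r(\varphi)^u$, which lies in the identity component of the Zariski closure of $\langle r(\varphi)\rangle$ and hence centralizes the finite group $r(I_K)$, so $\tr\widetilde r=\tr r$). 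Hence $\widetilde r^{ss}\cong\bigoplus_i\sigma_i$.

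Next I would invoke the structure theorem: a Frobenius-semisimple Weil--Deligne representation decomposes as a finite direct sum $\bigoplus_j\Sp_{n_j}(\tau_j)$ with each $\tau_j$ an irreducible representation of $W_K$ and each $n_j\ge1$, and the underlying $W_K$-representation of $\Sp_{n_j}(\tau_j)$ is the semisimple representation $\tau_j\oplus\tau_j\chi\oplus\cdots\oplus\tau_j\chi^{n_j-1}$ (with $\chi$ as in the Example, so $\chi(\varphi)=q^{-1}$). Comparing underlying $W_K$-representations with $\widetilde r^{ss}\cong\bigoplus_i\sigma_i$, the multiset $\{\sigma_1,\dots,\sigma_k\}$ coincides with $\{\tau_j\chi^a:\text{all }j,\ 0\le a<n_j\}$. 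In particular each $\tau_j\chi^a$ is pure, and since $\chi$ is pure of weight $-2$, each $\tau_j$ is pure, say of weight $v_j$, and $\tau_j\chi^a$ has weight $v_j-2a$.

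Finally I would use purity of $(r,N)$, say of weight $w$: Frobenius-semisimplification leaves $N$ (hence the monodromy filtration) unchanged and replaces $r(\varphi)$ by its semisimple part, which induces the same eigenvalues on each graded piece, so $(\widetilde r,N)$ is again pure of weight $w$. Then each indecomposable summand $\Sp_{n_j}(\tau_j)$ is pure of weight $w$, which forces $v_j=w+n_j-1$. So the weights occurring among the $\sigma_i$ are exactly the integers $w+n_j-1-2a$ for $0\le a<n_j$, i.e.\ for each $j$ the string $w+n_j-1,\ w+n_j-3,\ \dots,\ w-n_j+1$ symmetric about $w$; hence the maximal weight among the $\sigma_i$ is $w-1+\max_j n_j$, the minimal is $w+1-\max_j n_j$, and by hypothesis their difference $2(\max_j n_j-1)$ equals $2(t-1)$, giving $\max_j n_j=t$. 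Since $\sigma_1$ realizes the maximal weight $w+t-1$, it is the $a=0$ term $\tau_{j_0}$ for some index $j_0$ with $n_{j_0}=t$; hence $\Sp_{n_{j_0}}(\tau_{j_0})=\Sp_t(\sigma_1)$, and setting $(r',N'):=\bigoplus_{j\ne j_0}\Sp_{n_j}(\tau_j)$, which is a direct sum of pure weight-$w$ Weil--Deligne representations and so itself pure of weight $w$, we obtain $(r,N)^{\Frss}\cong(r',N')\oplus\Sp_t(\sigma_1)$.

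The crux is the structural input---the classification of Frobenius-semisimple Weil--Deligne representations as direct sums of the special representations $\Sp_n(\tau)$. Once that is in hand, what remains is the weight bookkeeping above, whose only delicate point is that purity forces $v_j=w+n_j-1$, so that the spread of the weights of the $\sigma_i$ reads off $\max_j n_j=t$ and pins down the indecomposable ``large part'' $\Sp_t(\sigma_1)$. (If $\sigma_1\cong\tau_j$ for more than one $j$, any such $j$ with $n_j=t$ can be taken as $j_0$; and the identity $\tr\widetilde r=\tr r$, though standard, is precisely what lets the hypothesis on $\tr r$ be applied to $(r,N)^{\Frss}$.)
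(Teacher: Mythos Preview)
Your proof is correct and follows essentially the same approach as the paper's: decompose $(r,N)^{\Frss}$ into special representations via Deligne's classification, match the irreducible constituents with the $\sigma_i$ by Brauer--Nesbitt, and then use the weight formula forced by purity (namely that $\tau_j$ has weight $w+n_j-1$) to identify a summand $\Sp_t(\sigma_1)$ and peel it off. You supply extra justification for two points the paper leaves implicit---that $\tr\widetilde r=\tr r$ and that Frobenius-semisimplification preserves purity---but the logical structure is identical.
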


\begin{proof}
By \cite[Proposition 3.1.3(ii)]{DeligneFormesModulairesGL2}, there exist positive integers $t_1\leq \cdots \leq t_m$ and Frobenius-semisimple irreducible representations $r_1,\cdots, r_m$ of $W_K$ over $\bbQpbar$ such that $(r, N)^\Frss$ is isomorphic to the direct sum $\oplus_{i=1}^m \Sp_{t_i} (r_i)$. So the trace of $r$ is equal to the sum of the traces of the representations
\begin{equation}
\label{Eqn lemma list}
r_i\chi^j , \quad 1\leq i\leq m, 0 \leq j \leq t_i-1,
\end{equation}
which are all irreducible $W_K$-representations. Since the trace of $r$ is also equal to the sum of the traces of the irreducible $W_K$-representations $\sigma_1, \cdots, \sigma_k$, by the Brauer--Nesbitt theorem \cite[30.16]{CurtisReinerReprTheory}, it follows that $\sigma_1, \cdots, \sigma_k$ are isomorphic to the representations in equation \eqref{Eqn lemma list} in some order. Hence $\sigma_1$ is equal to $r_a\chi^b$ for some $a, b$ with $1\leq a\leq m, 0\leq b \leq t_a-1$. Thus $r_a\chi^b$ has maximal weight among the representations in equation \eqref{Eqn lemma list}. So $b$ is necessarily zero. Suppose $(r, N)$ is pure of weight $w$. Then for any $i$, the representation $\Sp_{t_i} (r_i)$ is also pure of weight $w$ by \cite[1.6.7]{DeligneWeil2}. So $r_i$ has weight $w+ (t_i-1)$ for any $i$. Note that $\sigma_1\simeq r_a \chi^b = r_a$ has maximal weight among the representations of equation \eqref{Eqn lemma list}. This shows that the weight of $r_a$ is greater than or equal to the weight of $r_i$ for any $i$, i.e, $w + t_a-1\geq w+ (t_i-1)$ for all $i$. Consequently, the integers $t_a, t_m$ are equal since $t_1\leq \cdots \leq t_m$. Hence we have the following isomorphisms.
\begin{equation}
\label{Eqn: isomorphism}
(r, N)^\Frss 
\simeq \bigoplus_{1\leq i\leq m, i\neq a} \Sp_{t_i} (r_i) \oplus\Sp_{t_a} (r_a)
\simeq \bigoplus_{1\leq i\leq m, i\neq a} \Sp_{t_i} (r_i) \oplus\Sp_{t_m} (\sigma_1)
\end{equation}
Note that $2(t_m-1)$ is the difference of the weights of a highest and a lowest weight representation occurring in equation \eqref{Eqn lemma list}. On the other hand, $2(t-1)$ is equal to the difference of the weights of a highest and a lowest weight representation among $\sigma_1, \cdots, \sigma_k$, which are isomorphic to the representations occurring in equation \eqref{Eqn lemma list} in some order. Hence $t_m$ is equal to $t$. So $(r, N)^\Frss$ is isomorphic to $\oplus_{1\leq i\leq m, i\neq a} \Sp_{t_i} (r_i) \oplus\Sp_{t} (\sigma_1)$. Since $(r, N)^\Frss$ is pure, its direct summand $\oplus_{1\leq i\leq m, i\neq a} \Sp_{t_i} (r_i)$ is also pure by \cite[1.6.7]{DeligneWeil2}. Hence the lemma holds for $(r',N')$ equal to $\oplus_{1\leq i\leq m, i\neq a} \Sp_{t_i} (r_i)$. 
\end{proof}

\section{Pure representations and normalized traces}
\label{Section3}

In this section, we prove the following theorem and use it to deduce Theorem \ref{Theorem Introduction}. 

\begin{theorem}
\label{Theorem WD}
Let $(\rho_1, N_1)$ and $(\rho_2, N_2)$ be pure Weil--Deligne representations of $W_K$ with coefficients in $\bbQpbar$ such that $\rho_1, \rho_2$ have equal normalized traces. Then $(\rho_1, N_1)^\Frss$ is isomorphic to $\psi ^w \otimes (\rho_2, N_2)^\Frss$ where $w$ denotes the difference of the weights of $(\rho_1, N_1)$ and $(\rho_2, N_2)$. 
\end{theorem}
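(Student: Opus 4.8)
The plan is to prove Theorem \ref{Theorem WD} by induction on the common dimension $n$ of $\rho_1$ and $\rho_2$, using Lemma \ref{Lemma sum} as the engine that peels off one indecomposable (special) summand at a time. First I would normalize the setup: let $w_1, w_2$ be the weights of $(\rho_1,N_1)$ and $(\rho_2,N_2)$, so $w = w_1 - w_2$, and observe that it suffices to prove the claim after twisting both sides by $\psi^{-w_2}$, i.e.\ we may assume $(\rho_2,N_2)$ has weight $0$ and $(\rho_1,N_1)$ has weight $w$. The equality of normalized traces says $\tr(\psi^{-w_1}\otimes \rho_1) = \tr(\psi^{-w_2}\otimes\rho_2)$, which after the twist becomes $\tr(\psi^{-w}\otimes\rho_1) = \tr(\rho_2)$; equivalently, replacing $(\rho_1,N_1)$ by $\psi^{-w}\otimes(\rho_1,N_1)$ (still pure, now of weight $-w$... ) — more cleanly, I would phrase it so that after suitable twists both Weil--Deligne representations have the \emph{same} weight $w_0$ and $\rho_1, \rho_2$ have literally equal trace as $W_K$-representations, and the goal reduces to showing $(\rho_1,N_1)^\Frss \simeq (\rho_2,N_2)^\Frss$.

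Next, the key structural step. Decompose $(\rho_1,N_1)^\Frss \simeq \bigoplus_i \Sp_{t_i}(r_i)$ and $(\rho_2,N_2)^\Frss \simeq \bigoplus_j \Sp_{s_j}(r_j')$ via \cite[Proposition 3.1.3(ii)]{DeligneFormesModulairesGL2}. By the Brauer--Nesbitt theorem, the multiset of irreducible constituents $\{r_i\chi^a\}$ coincides with $\{r_j'\chi^b\}$. Among all these irreducible constituents, pick one, say $\sigma_1$, of maximal weight; since both $(\rho_1,N_1)^\Frss$ and $(\rho_2,N_2)^\Frss$ are pure of the same weight $w_0$, purity forces $\sigma_1$ to occur as the \emph{top} (untwisted, $j=0$) constituent of a special summand $\Sp_{t}(\sigma_1)$ in $(\rho_1,N_1)^\Frss$ and of a special summand $\Sp_{s}(\sigma_1)$ in $(\rho_2,N_2)^\Frss$, where $2(t-1)$ and $2(s-1)$ are each the difference between the weight of $\sigma_1$ and the minimal weight among the common list of constituents — hence $t = s$. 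Applying Lemma \ref{Lemma sum} to $\{\sigma_1,\dots,\sigma_k\}$ being the full constituent list (ordered with $\sigma_1$ first), I get $(\rho_1,N_1)^\Frss \simeq (r_1',N_1')\oplus \Sp_t(\sigma_1)$ and $(\rho_2,N_2)^\Frss \simeq (r_2',N_2')\oplus \Sp_t(\sigma_1)$ for pure Weil--Deligne representations $(r_1',N_1'), (r_2',N_2')$ of the same weight $w_0$. Their traces agree because the trace of $\Sp_t(\sigma_1)$ (as a $W_K$-rep, forgetting $N$) is $\tr\sigma_1 + \tr(\sigma_1\chi) + \cdots + \tr(\sigma_1\chi^{t-1})$, which is the same contribution subtracted from both sides. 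So $r_1'$ and $r_2'$ have equal trace, both are pure of weight $w_0$, and they have strictly smaller dimension; by the induction hypothesis (with $w=0$ in that application) $(r_1',N_1')^\Frss \simeq (r_2',N_2')^\Frss$, and since these are already Frobenius-semisimple, $(r_1',N_1')\simeq(r_2',N_2')$, whence $(\rho_1,N_1)^\Frss\simeq(\rho_2,N_2)^\Frss$. Untwisting gives the statement with the $\psi^w$ factor.

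I would set up the base case $n=0$ (or $n=1$) as trivial, and be careful in the induction that the statement being inducted on is exactly ``for all pure Weil--Deligne representations of dimension $\le n-1$ with equal normalized traces, the conclusion holds'' — so that it applies to $(r_1',N_1'), (r_2',N_2')$, which genuinely have smaller dimension since $\dim\Sp_t(\sigma_1) = t\dim\sigma_1 \ge 1$. One bookkeeping point to handle explicitly: after the initial twist reduction I should make sure the ``difference of weights is $2(t-1)$'' hypothesis of Lemma \ref{Lemma sum} is met — this is exactly the content of purity plus the choice of $\sigma_1$ as a maximal-weight constituent, together with identifying the minimal-weight constituent; I'd spell out that the set of weights appearing among $\{r_i\chi^a\}$ is symmetric enough (an interval of integers of the correct parity, by purity of each $\Sp_{t_i}(r_i)$) so that $t$ is well-defined and the same on both sides.

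The main obstacle, I expect, is not any single hard estimate but rather the careful orchestration of the twists: keeping track of weights through the normalization $\psi^{-w}$, ensuring that ``equal normalized traces'' translates into ``equal traces of the $r$-parts'' only after the correct twist, and verifying that the summand $\Sp_t(\sigma_1)$ stripped off on the $\rho_1$-side is \emph{identical} (not merely abstractly isomorphic up to twist) to the one stripped off on the $\rho_2$-side — which is what makes the residual traces agree and lets the induction close. Once the weights and twists are pinned down correctly, Lemma \ref{Lemma sum}, Brauer--Nesbitt, and Deligne's classification of Frobenius-semisimple Weil--Deligne representations do all the real work.
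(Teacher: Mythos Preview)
Your proposal is correct and follows essentially the same approach as the paper: induction on dimension, using Lemma~\ref{Lemma sum} to peel off a maximal special summand $\Sp_t(\sigma_1)$ and then applying the induction hypothesis to the remaining pure pieces. The only cosmetic difference is that you first twist to equalize the weights and then apply Lemma~\ref{Lemma sum} symmetrically to both sides, whereas the paper decomposes $(\rho_1,N_1)^\Frss$ explicitly and applies Lemma~\ref{Lemma sum} only to $(\rho_2,N_2)$ with the $\psi^{-w}$-twisted constituents as the $\sigma_i$'s; the underlying mechanism is identical.
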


\begin{proof}
Note that the above statement holds for Weil--Deligne representations on one-dimensional spaces. Assume that the underlying spaces of $(\rho_1, N_1), (\rho_2, N_2)$ are of dimension $n$ and the above statement holds for Weil--Deligne representations on spaces of dimension $< n$. By \cite[Proposition 3.1.3(ii)]{DeligneFormesModulairesGL2}, there exist positive integers $t_1\leq  \cdots\leq t_m$ and Frobenius-semisimple representations $r_1, \cdots, r_m$ of $W_K$ with open kernel such that $(\rho_1, N_1)^\Frss$ is isomorphic to the direct sum $\oplus_{i=1}^r \Sp_{t_i} (r_i)$. Since $(\rho_1,N_1)$ is pure, from \cite[1.6.7]{DeligneWeil2}, it follows that the representations $\Sp_{t_i}(r_i)$ are pure of the same weight. Consequently, among the representations
\begin{equation}
\label{Eqn: representation}
\text{$r_i\chi^{j}$ where $1\leq i\leq m, 0 \leq j \leq t_i-1$}
\end{equation}
the $W_K$-representation $r_m$ (resp. $r_m\chi^{t_m-1}$) is a highest (resp. lowest) weight representation. Note that the trace of $\rho_2$ is equal to the trace of $\psi^{-w}\otimes \rho_1$, which is equal to the sum of the traces of the following representations.
\begin{equation}
\label{Eqn: representation twist}
\text{$\psi^{-w}\otimes r_i\chi^{j}$ where $1\leq i\leq m, 0 \leq j \leq t_i-1$}
\end{equation}
The difference of the weights of a highest and a lowest weight representation among these representations is equal to $2(t_m-1)$. Moreover, $\psi^{-w}\otimes r_m$ is a representation of highest weight among them. Since $(\rho_2,N_2)$ is pure, by Lemma \ref{Lemma sum}, $(\rho_2, N_2)^\Frss$ is isomorphic to the direct sum $(\rho, N)\oplus \Sp_{t_m}(\psi^{-w}\otimes r_m)$ for some pure Frobenius-semisimple Weil--Deligne representation $(\rho, N)$. 
Note that by \cite[1.6.7]{DeligneWeil2}, the representation $\oplus_{1\leq i <m}\Sp_{t_i}(r_i)$ (resp. $(\rho, N)$) is pure having weight same as the weight of $(\rho_1, N_1)$ (resp. $(\rho_2, N_2)$). 
Hence the pure Weil--Deligne representations $\oplus_{1\leq i <m}\Sp_{t_i}(r_i)$ and $(\rho, N)$ have equal normalized traces and are of dimensions strictly smaller than $n$. So 
$\oplus_{1\leq i <m}\Sp_{t_i}(\psi^{-w}\otimes r_i)$ and $(\rho, N)$ are isomorphic by the induction hypothesis. Consequently, $(\rho_2, N_2)^\Frss$ is isomorphic to $\oplus_{1\leq i \leq m}\Sp_{t_i}(\psi^{-w}\otimes r_i) \simeq \psi^{-w}\otimes (\rho_1, N_1)^\Frss$. This completes the proof.
\end{proof}

\begin{remark}
\label{Remark}
When $(\rho_1, N_1), (\rho_2, N_2)$ are representations as in the statement of Theorem \ref{Theorem WD}, then the representations $(\rho_1, N_1), (\psi^w\otimes \rho_2, N_2)$ are pure with equal traces. 
Thus their traces arise as specializations of the pseudorepresentation $T:W_K\to \bbQpbar$ given by $T:= \tr \rho_1 =  \tr (\psi^w \otimes \rho_2)$ under the identity map $\mathrm{id}: \bbQpbar \to \bbQpbar$. So Theorem \ref{Theorem WD} can be recovered from \cite[Theorem 4.3]{BigPuritySubAIF}. 
\end{remark}

\begin{proof}
[Proof of Theorem \ref{Theorem Introduction}]
Let $\rho_1, \rho_2$ be as in the statement of Theorem \ref{Theorem Introduction}. Then their Weil--Deligne parametrizations $\WD(\rho_1), \WD(\rho_2)$ are pure with equal normalized traces. By Theorem \ref{Theorem WD}, the Weil--Deligne representation $\WD(\rho_1)^\Frss$ is isomorphic to $\psi^w\otimes \WD(\rho_2)^\Frss$.
\end{proof}

\providecommand{\bysame}{\leavevmode\hbox to3em{\hrulefill}\thinspace}
\providecommand{\MR}{\relax\ifhmode\unskip\space\fi MR }
\providecommand{\MRhref}[2]{%
  \href{http://www.ams.org/mathscinet-getitem?mr=#1}{#2}
}
\providecommand{\href}[2]{#2}

\end{document}